 \newcommand{\vero}[1]{{\color{black}#1}}
\def\bE{\mathbb E}
\def\E{\mathbb E}
\def\bN{\mathbb N}
\def\N{\mathbb N}
\def\bP{\mathbb P}
\def\bR{\mathbb R}
\def\R{\mathbb R}
\def\ss{\mathbb S}
\def\cA{{\mathcal A}}
\def\UU{{\mathcal U}}
\def\X {X:=\{X(s),s\in\ss\}}
\def\Y {Y:=\{Y(s),s\in\ss\}}
\DeclareMathOperator*{\argmin}{argmin}
\newtheorem{Th}{Theorem}[section]
\newtheorem{Pro}[Th]{Proposition}
\newtheorem{remark}{Remark}[section]
\numberwithin{equation}{section}
\title{A semi-parametric estimation for max-mixture spatial processes}
\author{M. Ahmed}
\address{Universit\'e de Lyon, Universit\'e Lyon 1, Institut Camille Jordan ICJ UMR 5208 CNRS, France\\
Department of statistics, University of Mosul, Iraq}
\author{ V. Maume-Deschamps}
\address{Universit\'e de Lyon, Universit\'e Lyon 1, Institut Camille Jordan ICJ UMR 5208 CNRS, France}
\author{P.Ribereau}
\address{Universit\'e de Lyon, Universit\'e Lyon 1, Institut Camille Jordan ICJ UMR 5208 CNRS, France}
\author{C.Vial}
\address{Universit\'e de Lyon, Universit\'e Lyon 1, Institut Camille Jordan ICJ UMR 5208 CNRS, France}
\keywords{Spatial dependence measures, asymptotic dependence/independence, max-stable process, max-mixture, madogram}
\begin{document}

%\begin{frontmatter}

%% Title, authors and addresses

%% use the tnoteref command within \title for footnotes;
%% use the tnotetext command for the associated footnote;
%% use the fnref command within \author or \address for footnotes;
%% use the fntext command for the associated footnote;
%% use the corref command within \author for corresponding author footnotes;
%% use the cortext command for the associated footnote;
%% use the ead command for the email address,
%% and the form \ead[url] for the home page:
%%
%% \title{Title\tnoteref{label1}}
%% \tnotetext[label1]{}
%% \author{Name\corref{cor1}\fnref{label2}}
%% \ead{email address}
%% \ead[url]{home page}
%% \fntext[label2]{}
%% \cortext[cor1]{}
%% \address{Address\fnref{label3}}
%% \fntext[label3]{}

\begin{abstract}
We propose a semi-parametric estimation procedure in order to estimate the parameters of a max-mixture model as an alternative to composite likelihood \vero{estimation}. \vero{This procedure uses} 
the  \textit{F-madogram}. \vero{We propose to minimize} the square difference between the theoretical F-madogram and an empirical one. We evaluate the performance of this estimator through a 
simulation study \vero{and we compare our method to composite likelihood estimation}. We apply our estimation procedure to daily rainfall \vero{data from} East Australia.  \\
\end{abstract}

\maketitle

%% keywords here, in the form: keyword \sep keyword

%% MSC codes here, in the form: \MSC code \sep code
%% or \MSC[2008] code \sep code (2000 is the default)

%%
%% Start line numbering here if you want
%%
%% \linenumbers

%% main text
\section{ Introduction}
\vero{One of the main characteristic of environmental or climatic data is their } spatial dependence. The dependencies may be strong even for large distances as \vero{observed for} heat waves or 
they may be strong at short distances and weak at larger distances, as \vero{observed for cevenol rainfall} events. Many dependence structures may arise, for example, asymptotic 
dependence, asymptotic independence, or both. \vero{Max-mixture models as defined in \cite{wadsworth2012dependence}  are a mixture between a max-stable process and an Asymptotic Independent (AI) 
process. These kind of models may be useful to fit e.g. rainfall data (see \cite{bacro2016flexible})}.  \\
 \\
  The estimation of the parameters of these processes remains \vero{challenging}. The usual way to estimate parameters in spatial contexts is to maximize the composite likelihood. For 
example, in \cite{padoan2010likelihood}, \cite{davison2012geostatistics} and many others, the composite likelihood maximization is used to estimate the parameters of max-stable processes. In   
\cite{bacro2016flexible} and \cite{wadsworth2012dependence}, it is used to estimate the parameters of max-mixture processes. Nevertheless, the estimation \vero{remains unsatisfactory in some 
cases as it seems to have difficulties  estimating the AI} part. \vero{So that, an alternative should be welcomed.} 
\\
We propose a semi-parametric estimation procedure as an alternative  to composite likelihood maximization for max-mixture processes. Our procedure is  a least square method on the $F$-madogram. That 
is, we minimize the \vero{squared}  difference between the theoretical $F$-madogram and the empirical one. Some of  literature  deals with semi-parametric estimation in modeling spatial extremes. For 
example, \cite{northrop2015efficient} and \cite{aghakouchak2010semi} provided  semi-parametric estimators of extremal indexes. In  \cite{buhl2016Semiparametric}, another  semi-parametric  procedure to 
estimate model parameters is introduced. It is based \vero{a non linear least square method based on the extremogram } for isotropic space-time  Browen-Resnick max-stable processes. The 
semi-parametric procedure \vero{that we} propose in  this study is \vero{close to that} article.\\
\\
 Section \ref{Se:2} is dedicated to the main tools used in this study; it contains definitions of some spatial dependence measures  and  of different dependence structures (asymptotic 
dependence/independence and  mixture of them). In Section \ref{Se:3}, we calculate an expression for the $F$-madogram of max-mixture models. Section \ref{Se:4} is 
devoted to estimation procedures of  the parameters of max-mixture processes. \vero{We prove that  our least square-madogram estimation is consistent}, provided that the \vero{parameters}
are identified by the $F$-madogram.
 %Composite likelihood and LS-madogram. A simulation study  is conducted, which allows us to evaluate the performance of 
\vero{In Section \ref{Se:5}, simulation study is conducted, it allows us to evaluatere the performance of the estimation procedure.} Section \ref{Se:6} is devoted {\vero to an application on} rainfall 
real data \vero{from} East Australia.  Finally, concluding remarks are discussed in Section \ref{Se:7}.\\
 
%=============================================================================================
 \section{Main tools used in the study}\label{Se:2}
Throughout this study, the spatial process $\X, \ss\in\bR^d$ is assumed to be  strongly stationary and isotopic. \vero{We recall some basic facts and definitions related to spatial extreme theory.}
\subsection{Dependence measures}\label{DM:1}
%%%%
%%
%Important properties of a spatial process are described by its spatial dependence structure and lots of measures are defined in the literature to better understand the dependence in real spatial 
%data. \\
\vero{The extremal dependence behavior of spatial processes may be described by several coefficient / measures. }\\
The \textbf{upper tail dependence coefficient} $\chi$ (see\cite{bacro2013measuring,sibuya1959bivariate}) is defined  for a stationary spatial process $X$ on $\ss\subset\bR^2$ with margin $F$ 
\vero{by:}
  \begin{equation}\label{max49}
\chi(h)=\lim_{u\to1^-}\bP\big(F(X(s+h))>u|F(X(s))>u\big)\/. 
\end{equation}
The process is said AI (resp. AD) if for all $h\in\ss$ $\chi(h)=0$  (resp. $\chi(h)\neq0$).

For any $h$ the coefficient $\chi(h)$ can alternatively be expressed as the limit when $u\to1^-$ of the function defined on $\ss\times[0,1]$ into $[0,1]$, by
 \begin{equation}\label{max51}
\chi(h,u)=2-\frac{\log\bP\big(F(X(s))<u,F(X(t))<u\big)}{\log\bP\big(F(X(s))<u\big)}, \textnormal{for } h\in\ss, u\in[0,1[.
\end{equation}
Such that, $\chi(h)=\lim_{u\to ^-1}\chi(h,u)$. \\
\\
 %For  asymptotic  independent processes,  the behaviour of the conditional probability with respect to $u$ suggests that the asymptotic independence could  not be detected as it appears for $u$ 
%very near to $1$. Thus, it suggests that the function $\chi$ is more useful to study  asymptotic dependence than  asymptotic independence. 
\vero{For AI processes, $\chi$ cannot reveal the strength of the dependence.} This is why the authors in~\cite{coles1999dependence}, introduced an alternative dependance coefficient called 
\textbf{lower tail dependence coefficient $\overline{\chi}$}. %This quantity measures the strength of asymptotic independence of a process. The function $\overline\chi$ defined from $\ss$ into 
\vero{Consider} for any $(s,s+h)\in\ss^2$
\begin{equation}\label{ind2}
\overline{\chi}(h,u)=\frac{2\log\bP\big(F(X(s))>u\big)}{\log\bP\big(F(X(s))>u,F(X(s+h))>u\big)}-1,\quad 0\leq u\leq1
\end{equation}  
 \vero{and} $\overline{\chi}(h)=\lim_{u\to1}\overline{\chi}(h,u)$.

 If $\overline{\chi}(h)=1$ for all $h$, the spatial process is asymptotically dependent. Otherwise, the process is  asymptotically independent. Furthermore, if $\overline\chi\in]0,1[$ ( resp. 
$]-1,0[$) the locations $s$ and $s+h$ (for any $s$) are asymptotically positively associated (resp. asymptotically negatively associated).

 Another important measure of dependence \vero{is the extremal coefficient which} was introduced by \cite{buishand1984bivariate,schlather2003dependence}.% is the extremal coefficient between two 
\vero{For any $s\in\ss$ and $s+h\in\ss$ and $x\in\bR$, let}
 $$
 \theta_F(h,x)=\frac{\log(P(X(s)<x,X(s+h)<x))}{\log(P(X(s)<x))}.
 $$

\vero{$\theta_F$} is related  \vero{with} the upper tail dependence parameter; indeed if $\lim_{x\rightarrow x_F}\theta_F(h,x)=\theta(h)$ exists, we have the following relation 
(see \cite{bacro2013measuring}):
  $$\chi(h)=2-\theta_F(h),$$
 where $x_F=\sup\{x|F(x)<1\}$. \vero{In that case,} $P(X(s)<x,X(s+h)<x))$ may be approximated by $F(x)^{\theta_F(h)}$ for $x$ large.
 
 This coefficient is  particularly useful when dealing with asymptotic dependence, but useless in case of asymptotic independence. To overcome this problem, \cite{ledford1996statistics} proposed a 
model allowing to gather all the different dependence \vero{behaviors. This model has Fréchet marginal laws and} for all $(s,s+h)\in\ss^2$ the pairwise survivor  function \vero{is given by:}
 \begin{equation}\label{ind1}
 \bP\big(X(s)>x,X(s+h)>x\big)=\mathcal{L}_{h}(x)x^{-1/\eta(h)},\quad \textnormal{ as } x\to\infty\/,
\end{equation} 
where $\mathcal{L}_{h}$ is a slowly varying function and $\eta(h)\in(0,1]$ is the \textbf{tail dependence coefficient}. This coefficient  determines the decay rate of the bivariate tail 
probability for large $x$. The interest of this simple modelization, which appears to be quite general, is that the coefficient $\eta(h)$ provides a measure of the extremal dependence \vero{between} 
$X(s)$ and $X(x+h)$. \\
$\mathcal{L}(x)\not\to 0$ (resp.  $0<\eta(h)<1$ and $\mathcal{L}(x)\not\to 0$), corresponds to asymptotic dependence  (resp.  asymptotic independence); see~\cite{ 
bacro2013measuring,ledford1996statistics}. Finally, it is important to see the relation between $\eta$ and $\overline\chi$. If equation~(\ref{ind1}) is satisfied, then 
$\overline{\chi}(h)=2\eta(h)-1$.\\

Another classical tool often used in geostatistics is the variogram. But for spatial  processes with Fréchet \vero{marginal laws, the variogram does not exist.} We shall use the \textbf{$F$-madogram} 
introduced in \cite{cooley2006variograms} which is defined for any spatial process $X$ with univariate margin $F$ and  for all $(s,t)\in\ss^2$
\begin{equation}\label{mado}
\nu^F(s-t)=\frac{1}{2}\bE|F(X(s))-F(X(t))|.
\end{equation}

%===========================================================================================
 \subsection{Spatial extreme models}
 \vero{For completeness, we recall definitions on max-stable, inverse max-stable and max-mixture processes.}
\subsection{Max-stable model}
Max-stable processes are the extension of the multivariate extreme value theory to the infinite dimensional setting \cite{buhl2016Semiparametric}.  If there exist two sequences of continuous functions 
$(a_n(\cdot)>0,n \in\mathbb{N})$ and $(b_n(\cdot)\in\bR,n \in\mathbb{N})$ such that for all $n\in\mathbb{N}$ and $n$ i.i.d.  $X_1,\ldots,X_n$ and $X$ a process, such that
\begin{equation}\label{th-max}
\max_{i=1,...,n}\frac{X_i-b_n}{a_n} \stackrel{d}\to X,\quad n\to\infty,
\end{equation} 
then $\X$ is a max-stable process \cite{de2006spatial}. When for all $n\in\bN$, $a_n=1$ and $b_n=0$, the margin distribution of the process $X$ is unit Fréchet, that is for any $s\in \ss$ and $x>0$,
$$
F(x):=P(X(s)\leq x)= \exp[-1/x]\/,
$$ 
\vero{in that case, we say that $X$ is a {\em simple max-stable porcess}.}\\
\vero{In \cite{de1984spectral} it is} proved that a max-stable process $X$ can be constructed by using a random process and a Poisson process. This representation is named the \textbf{spectral 
representation}. Let $X$ be a max-stable process on $\ss$. Then there exists $\{\xi_i,i\geq1\}$ i.i.d Poisson point  process on $(0,\infty)$, with intensity $d\xi/\xi^2$ and a sequence 
$\{W_i,i\geq1\}$ of i.i.d. copies of a positive  process $W=(W(s),s\in\ss)$, such that $\bE[W(s)]=1$ for all $s\in\ss$ such that
\begin{equation}\label{max2BB}
X=^d\max_{i\geq1}\xi_iW_i \/.
\end{equation}
\vero{The c.d.f. of a simple max-stable process satisfies: (see~\cite{beirlant2006statistics}, Section 8.2.2.)}
\begin{equation}\label{max44}
\bP\big(X(s_1)\leq x_1,...,X(s_k)\leq x_k\big)=\exp\{-V(x_1,...,x_k)\},
\end{equation}
where the function 
\begin{equation}\label{max45}
V(x_1,...,x_k)=\bE\bigg[\max_{\ell=1,...,k}\bigg(\frac{W(s_\ell)}{x_\ell}\bigg)\bigg].
\end{equation}
is homogenous of order $-1$ and is named \textit{the exponent measure}. 
One of the interests of the exponent measure is its interpretation in terms of dependence. In fact, the homogeneity of the exponent measure $V$ implies
 \begin{equation}\label{max47}
\max\{1/x_1,...,1/x_k\}\leq V(x_1,...,x_k)\leq \{1/x_1+...+1/x_k\}. 
\end{equation}
In Inequalities~(\ref{max47}), the lower (resp. upper) bound corresponds to complete dependence (resp. independence). \\
%%

%\textbf{ The extremal coefficient function $\Theta$}, is a convenient measure of dependence for max-stable processes which is completely characterized through the exponent measure $V$; \cite{coles1999dependence,schlather2003dependence}. We are particularly interested in the spatial case, i.e. $d=2$, the extremal coefficient function defined and studied in~\cite{schlather2002inequalities,schlather2003dependence}. 
For simple max-stable process, \textbf{ the extremal coefficient function $\Theta$}, for any pairs of sites $(s,s+h)\in\ss^2$ is the function $\Theta$ defined on $\ss$ (or in $\bR^+$ in isotropic case) with values in $[1,2]$ by 
 \begin{equation}\label{max48}
\bP\big(X(s)\leq x,X(s+h)\leq x\big)=\exp\{-\Theta(h)/x\},\, x>0
\end{equation}
\vero{We have}
 \begin{equation}\label{max50}
\Theta(h)=\bE\big[\max\{W(s),W(s+h)\}\big]=V(1,1)\in[1,2].
\end{equation}
If for any $h\in\ss$, $\Theta(h)=1$ (resp. $\Theta(h)=2$), then we have complete dependence (resp. complete independence). The case  $1<\Theta(h)<2$, for all $h\in\ss$ corresponds to asymptotic 
dependence. Remark that, \vero{for a} simple max-stable process, the coefficients $\Theta$ and $\theta_F$ coincide.

Furthermore, it is easy to see the relationship between $\Theta$ and $\chi$; see~\cite{wadsworth2012dependence} for any $h\in\ss$ 
\begin{equation}\label{relationthetachi}
\Theta(h)=2-\chi(h).
\end{equation}

In the max-stable case,  \cite{cooley2006variograms} gives the relation for all $h\in\ss$,
\begin{equation}\label{extmad}
\Theta(h)=\frac{1+2\nu^F(h)}{1-2\nu^F(h)},
\end{equation} 

which appears to be helpful to estimate the extremal coefficient $\Theta$ \vero{from the empirical madogram}. The max-stable process $X$ with pairwise distribution function is given by the following 
equation, for all $(s,s+h)\in\ss^2$,
\begin{equation}\label{max52}
\bP\big(X(s)\leq x_1,X(s+h)\leq x_2\big)=\exp\{-V_h(x_1,x_2)\},
\end{equation} 
\\
We provide \vero{below} three examples of well-known max-stable models represented by different exponent measures $V$.\\

{\textbf{Smith Model (Gaussian extreme value model)}} \cite{smith1990max} with unit Fr\'echet margin and exponent measure 
% Let $\{(\xi_i,s_i)\}$ be a Poisson point process on $(0,\infty)\times\bR^d$ with intensity $\xi^{-2}\mathrm{d}\xi\mathrm{d}s$ and  consider the $d$-dimensional Gaussian probability density function $\varphi_d(.;\Sigma)$ with mean 0 and covariance matrix $\Sigma$. For all $s \in \bR^d$, define  $W_i(s)=\varphi_d(s-s_i;\Sigma)$ and
% \begin{equation}\label{max51b}
%X(s)=\max_{i\geq1}\{\xi_i \varphi_d(s-s_i;\Sigma)\}.
%\end{equation}
%Then, $X$ is a max-stable process on $\ss=\bR^d$ with unit Fréchet margin. where
\begin{equation}\label{max53}
V_h(x_1,x_2)=\frac{1}{x_1}\Phi\bigg(\frac{\tau(h)}{2}+\frac{1}{\tau(h)}\log\frac{x_2}{x_1}\bigg)+\frac{1}{x_2}\Phi\bigg(\frac{\tau(h)}{2}+\frac{1}{\tau(h)}\log\frac{x_1}{x_2}\bigg);
\end{equation} 
$\tau(h)=\sqrt{h^T\Sigma^{-1}h}$ and $\Phi(\cdot)$ the standard normal cumulative distribution function. For isotropic case $\tau(h)=\sqrt{\frac{1}{\sigma}\|h\|^2}$.\\
 The pairwise extremal coefficient equals
$$
\Theta(h)=2\Phi\bigg(\frac{{\tau(h}}{{2}}\bigg).
$$
\textbf{Brown-Resnik Model} \cite{kabluchko2009stationary}  with unit Fr\'echet margin and exponent measure
$$
V_h(x_1,x_2)=\frac{1}{x_1}\Phi\bigg(\frac{\sqrt{2\gamma(h)}}{2}+\frac{1}{\sqrt{2\gamma(h)}}\log\frac{x_2}{x_1}\bigg)+\frac{1}{x_2}\Phi\bigg(\frac{\sqrt{2\gamma(h)}}{2}+\frac{1}{\sqrt{2\gamma(h)}}\log\frac{x_1}{x_2}\bigg);
$$
$2\gamma(h)$ is a variogram and $\Phi(\cdot)$ the standard normal cumulative distribution function. 
The pairwise extremal coefficient given by
$$\Theta(h)=2\Phi\bigg(\frac{\sqrt{2\gamma(h)}}{2}\bigg).
$$
{ \textbf{Truncated extremal Gaussian Model (TEG)}} \cite{schlather2002models} with unit Fr\'echet margin and exponent measure

%This model introduced by \cite{schlather2002models} provides a class based on stationary random field with finite expectation. Let $W:=\{W(s),s\in\ss\}$ be a stationary random field, with $\bE\big[W^+(s)\big]=\mu\in(0,\infty)$, where $W^+(s)=\max\{0,W(s)\}$ and $\mu^{-1}=\sqrt{2\pi}(\bE[|\mathcal{B}|])^{-1}$. Let $\{\xi_i,i\geq1\}$ be a Poisson point  process on $(0,\infty)$, with intensity $\mathrm{d}\xi/\xi^2$ and $\{W_i,i\geq1\}$ as iid copies of $W(s)$. Then 
 %\begin{equation}\label{max57}
%X(s)=\max_{i\geq1}\xi_iW_i(s)\mathds{1}_{\mathcal{B}_i}(s-r_i),\quad s\in\ss
%\end{equation} 
%with $\mathcal{B} \subset \ss$ a compact random set and $\mathcal{B}_i$ i.i.d. copies of $\mathcal{B}$.
%The process $X$ \textit{is a truncated extremal Gaussian process (TEG)} with pairwise exponent measure 
\begin{equation}\label{max59}
V_h(x_1,x_2)=\bigg(\frac{1}{x_1}+\frac{1}{x_2}\bigg)\bigg[1-\frac{\alpha(h)}{2}\bigg(1-\sqrt{1-2(\rho(h)+1)\frac{x_1x_2}{(x_1+x_2)^2}}\bigg)\bigg].
\end{equation} 
The extremal coefficient is given by
 \begin{equation}\label{max60}
\Theta(h)=2-\alpha(h)\left\{1-\bigg(\frac{1-\rho(h)}{2}\bigg)^{1/2}\right\}
\end{equation} 
where $\alpha(h)=\bE\{|\mathcal{B}\cap(h+\mathcal{B})|\}/\bE[|\mathcal{B}|]$ \vero{and} $\mathcal{B}$ is a random set which can consider  it a disk with fixed radius $r$. \vero{In such a 
case,} $\alpha(h)=\{1-h/2r\}_+$ (see \cite{davison2013geostatistics}) \vero{and} $\chi(h)=0,\forall h\geq2r$. 

\subsection{Inverse Max-stable processes}
%If we choose a threshold too low, we may miss the dependence structure. In other words, in theoretical study, the limiting distribution of extremes tends  to be independent but in practice, this 
%limit could never be  achieved (see \cite{davison2013geostatistics,thibaud2013threshold}).
\vero{Max-stable processes are either asymptotically Dependent (AS) or independent. This means that they are not useful to model non trivial AI processes. } In,\cite{wadsworth2012dependence} a class 
of asymptotically  independent processes \vero{is}  obtained by inverting max-stable processes. These processes are called  \textbf{inverse max-stable processes}; \vero{their survivor 
function satisfy Equation}~(\ref{ind1}).  Let $X':=\{X'(s),s\in\ss\}$ be a \vero{simple} max-stable process \vero{with exponent measure $V$.}
Let  $g:(0,\infty)\mapsto(0,\infty)$ be  defined by 
$g(x)=-1/\log\{1-e^{-1/x}\}$ \vero{and} $X(s)=g(X'(s))$. Then, $\X$ is an  asymptotic independent spatial process with unit Fréchet margin. The $d$-dimensional joint survivor function \vero{satisfies}
\begin{equation}\label{ind5}
\begin{split}
\bP\big(X(s_1)>x_1,...,X(s_d)>x_d\big)=&\exp\left\{-V\big(g(x_1),...,g(x_d)\big)\right\}\/.
\end{split}
\end{equation}    
%where $V$ is the exponent measure of the process $X'$ defined in equation (\ref{max45}). 
The tail dependent coefficient is given by $\eta(h)=1/\Theta(h)$, where $\Theta(h)$ is the extremal coefficient of the max-stable process $X'$.  Moreover, we have $\widetilde{\chi}(h)=2/\Theta(h)-1$. 
\vero{With a slight abuse of notations, we shall say that $V$ is the exponent measure of $X$.}

\subsection{Max-mixture model}\label{max-mixture}
In spatial contexts, specifically in an environmental domain, many scenarios of dependence could arise and AD and AI might cohabite. The work by \cite{wadsworth2012dependence} provides a flexible model called max-mixture.\\
Let $\X$  be a \vero{simple} max-stable process with extremal coefficient $\Theta(h)$ and bivariate distribution function $F_X$, and  let $\Y$ be an \vero{inverse max-stable process} whose 
 tail dependence \vero{coefficient is} $\eta(h)$. \vero{Its bivariate distribution function is denoted}  $F_Y$. Assume that $X$ and 
$Y$ are independent. Let  $a\in[0,1]$ and \vero{define}
\begin{equation}\label{max1}
Z(s)=\max\{aX(s),(1-a)Y(s)\}, \quad s\in\ss,
\end{equation} 
then $Z$ unit  Fréchet marginals and \vero{its pairwise survivor function satisfies}
\begin{equation}\label{max3}
\bP\big(Z(s)>z,Z(t)>z\big)\sim\frac{a\{2-\Theta(h)\}}{z}+\frac{(1-a)^{1/\eta(h)}}{z^{1/\eta{(h)}}}+O(z^{-2}),\quad z\to\infty.
\end{equation}  
\vero{The process $(Z(s))_{s\in\ss}$ is called a max-mixture process.} Assume there exists  finite $h^*=inf\{h:\Theta(h)\neq 0\}$; then,
\begin{equation}\label{co1}
\chi(h)=a(2-\Theta(h))
\end{equation}
and 
\begin{equation}\label{co2}
\overline{\chi}(h)=\mathds{1}_{[h^*< h]}(h)+(2\eta(h)-1)\mathds{1}_{[h*\geq h)}.
\end{equation}
\begin{remark}
If there exists  finite $h^*=inf\{h:\Theta(h)\neq 0\}$, then $Z$ is asymptotically dependent up to distance $h^*$ and asymptotically independent for larger distances.\\
\vero{Of course, if $a=0$ then $Z$ is AI and if $a=1$ then $Z$ is a simple max-stable process.}
\end{remark}
\vero{In}  \cite{bacro2016flexible} \vero{max-mixture processes are studied. The authors emphasize the fact that these models allow asymptotic dependence and independence  to be present at a short 
and intermediate  distances. Furthermore, the process may be independent at  long distances (using e.g. TEG processes).} 
%===========================================================================
 \section{$F$-madogram for max-mixture spatial process}\label{Se:3}
In extreme value theory and therefore for spatial extremes, one of the main concerns is to find a dependence measure that can quantify the dependences between locations. 
%Many authors have proposed several such measures, especially for max-stable processes (recall Section \ref{DM:1} where we discussed deeply on these tools). \\
The $\chi$ and $\overline{\chi}$ dependence measures are designed to quantify asymptotic dependence and asymptotic independence respectively \vero{(see equations (\ref{co1}) and 
(\ref{co2}))}.  Max-mixture processes have been introduced in order to 
provide both behaviors.  We are then faced with the question of finding an adapted tool which would give information on more than one dependence structure.\\ 

In \cite{cooley2006variograms},  the $F$-madogram has been introduced for max-stable processes. There exists several definitions of madograms. For example, in \cite{naveau2009modelling}, the
$\lambda$-madogram is considered  in order to take into account the dependence information from the exponent measure $V_h(u,v)$ when $u\neq v$. This  $\lambda$-madogram has been extended in 
\cite{fonseca2011generalized}  to evaluate the dependence between two observations located in two disjoint regions  in $\bR^2$. \cite{guillou2014madogram} adopted a $F$-madogram suitable for 
asymptotic independence instead of asymptotic dependence only. Finally, \cite{bacro2010testing} used F-madogram as a test statistic for  asymptotic independence bivariate maxima. \\
\\
Below, we calculate $\nu^F(h)$ for a max-mixture process. It appears that contrary to $\chi$ and $\overline{\chi}$, it combines the parameters coming from the AD and the AI parts.\\
 \begin{Pro}\label{madogram}
Let $Z$ be a max-mixture process, with mixing coefficient $a\in[0\/,1]$. Let $X$ be its max-stable part with extremal coefficient $\Theta(h)$. Let $Y$ be its  inverse max-stable part with tail 
dependence coefficient $\eta(h)$. Then,  the $F$-madogram  of $Z$ is \vero{given by}
\begin{multline} \label{mado:1} 
\nu^F(h)=\\
\frac{a(\Theta(h)-1)}{a(\Theta(h)-1)+2}-\frac{a\Theta(h)-1}{2a\Theta(h)+2}- \frac{1/\eta(h)}{a\Theta(h)+(1-a)/\eta(h)+1}\beta\bigg(\frac{a\Theta(h)+1}{(1-a)},1/\eta(h)\bigg)\/,
\end{multline}
where $\beta$ is beta function. 

 \end{Pro}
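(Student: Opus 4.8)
The plan is to reduce the madogram to a single integral of the bivariate distribution function of $Z$, to factorize that distribution function using the independence of the max-stable part $X$ and the inverse max-stable part $Y$, and to recognize the one non-elementary piece as a Beta integral. Since $Z$ has unit Fréchet margins, its marginal c.d.f. is $F(z)=e^{-1/z}$, so $U:=F(Z(s))$ and $V:=F(Z(t))$ are uniform on $[0,1]$. From $\bE[\max(U,V)]+\bE[\min(U,V)]=\bE U+\bE V=1$ and $\bE[\max(U,V)]-\bE[\min(U,V)]=\bE|U-V|$ one gets $\bE|U-V|=2\,\bE[\max(U,V)]-1$, and since $\bE[\max(U,V)]=\int_0^1\bP(\max(U,V)>u)\,du=1-\int_0^1 G(u)\,du$ with $G(u):=\bP(U\le u,V\le u)$, this yields
\[
\nu^F(h)=\tfrac12\,\bE|U-V|=\tfrac12-\int_0^1 \bP\big(F(Z(s))\le u,\,F(Z(t))\le u\big)\,du .
\]
Writing $z=F^{-1}(u)=-1/\log u$ (so that $e^{-1/z}=u$), the integrand is exactly $\bP(Z(s)\le z,Z(t)\le z)$, and the whole problem becomes the evaluation of this integral.

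To factorize the integrand, I would use $Z(s)=\max\{aX(s),(1-a)Y(s)\}$ together with the independence of $X$ and $Y$, giving
\[
\bP(Z(s)\le z,Z(t)\le z)=\bP\big(X(s)\le \tfrac{z}{a},X(t)\le \tfrac{z}{a}\big)\,\bP\big(Y(s)\le \tfrac{z}{1-a},Y(t)\le \tfrac{z}{1-a}\big).
\]
For the max-stable factor, Equation~(\ref{max48}) gives $\bP(X(s)\le x,X(t)\le x)=e^{-\Theta(h)/x}$, which at $x=z/a$ and $e^{-1/z}=u$ becomes $u^{a\Theta(h)}$. For the inverse max-stable factor I would first obtain its bivariate survivor function from~(\ref{ind5}): by the $-1$-homogeneity of the exponent measure of the inverted process, $V(g(y),g(y))=V(1,1)/g(y)$, where $V(1,1)$ is the extremal coefficient of that underlying max-stable process and hence equals $1/\eta(h)$ by the relation $\eta=1/\Theta$ recalled above; together with $1/g(y)=-\log(1-e^{-1/y})$ this gives $\bP(Y(s)>y,Y(t)>y)=(1-e^{-1/y})^{1/\eta(h)}$. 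Inclusion--exclusion on the uniform margins then produces the bivariate c.d.f., and after the substitution $y=z/(1-a)$ with $e^{-(1-a)/z}=u^{1-a}$ the $Y$-factor reads $2u^{1-a}-1+(1-u^{1-a})^{1/\eta(h)}$. Multiplying the two factors gives
\[
\bP(Z(s)\le z,Z(t)\le z)=u^{a\Theta(h)}\Big[\,2u^{1-a}-1+\big(1-u^{1-a}\big)^{1/\eta(h)}\Big].
\]

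It then remains to integrate this expression over $u\in[0,1]$. The first two summands give elementary power integrals equal to $2/(a\Theta(h)+2-a)$ and $-1/(a\Theta(h)+1)$; substituting them into $\nu^F(h)=\tfrac12-\int_0^1\!\bP(\cdots)\,du$ and collecting terms reproduces the first two summands of~(\ref{mado:1}), a short algebraic identity I would verify directly. The only non-elementary piece is $\int_0^1 u^{a\Theta(h)}(1-u^{1-a})^{1/\eta(h)}\,du$: the substitution $w=u^{1-a}$ turns it into $\tfrac{1}{1-a}\,\beta\!\big(\tfrac{a\Theta(h)+1}{1-a},\,\tfrac{1}{\eta(h)}+1\big)$, and I would then apply the Beta recurrence $\beta(p,q+1)=\tfrac{q}{p+q}\,\beta(p,q)$ with $p=(a\Theta(h)+1)/(1-a)$ and $q=1/\eta(h)$ to lower the second argument, which produces exactly the stated Beta term with prefactor $\tfrac{1/\eta(h)}{a\Theta(h)+(1-a)/\eta(h)+1}$.

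I expect the main obstacle to be the inverse max-stable factor rather than the bookkeeping: one must correctly identify the exponent of its bivariate tail as $1/\eta(h)$ --- via the identity $\eta(h)=1/\Theta(h)$ for the inverted process and the homogeneity of the exponent measure --- and then carry the term $1-e^{-1/y}$ exactly through the marginal transformation, since replacing it by its asymptotic equivalent $1/y$ would recover only the leading order of~(\ref{ind1}) and not the exact $F$-madogram. A secondary subtlety is matching the Beta argument in~(\ref{mado:1}), which is $1/\eta(h)$ rather than the $1/\eta(h)+1$ naturally produced by the substitution, and is resolved only through the Beta recurrence above.
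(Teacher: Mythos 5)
Your proposal is correct and follows essentially the same route as the paper: both reduce $\nu^F(h)$ to $\bE[\max(F(Z(s)),F(Z(s+h)))]-\tfrac12$, derive the identical expression $u^{a\Theta(h)}\big[2u^{1-a}-1+(1-u^{1-a})^{1/\eta(h)}\big]$ for the c.d.f.\ of the maximum, and integrate to obtain the Beta term. The only differences are cosmetic (you integrate the tail $\bP(M>u)$ rather than $u\,f_{M}(u)$, and you spell out the factorization of the joint c.d.f.\ and the Beta recurrence that the paper leaves implicit).
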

\begin{proof}
We have
\begin{equation} \label{defmado}
\nu^F(h)=\frac12\bE |F(Z(s))-F(Z(s+h))|. 
\end{equation}
The equality $|a-b|/2=\max(a,b)-(a+b)/2$ leads to 
\begin{equation} \label{defmado1}
\nu^F(h)=\bE \big[\max\big(F(Z(s)),F(Z(s+h))\big)\big]-\bE\big[F(Z(s))\big]. 
\end{equation}
Let $M(h)=\max\big(F(Z(s)),F(Z(s+h))\big)$, we have:
\begin{equation}\label{prob11}
\bP\big(M(h)\leq u\big)=\bP\big(Z(s)\leq F^{-1}(u), Z(s+h)\leq F^{-1}(u)\big).
\end{equation}
From \vero{definition} of the max-mixture spatial process $Z$, \vero{we have}
\begin{equation}
\bP\big(Z(s)\leq z_1,Z(s+h)\leq z_2\big)=e^{-aV_X^h(z_1,z_2)}\bigg[e^{\frac {-(1-a)} {z_1}}+e^{\frac {-(1-a)} {z_2}}-1+e^{-V_Y^h(g(z_1),g(z_2))}\bigg],
\end{equation} 
 where $V_X$ (resp.$V_Y$) corresponding to the exponent measures of $X$ (resp. $Y$) and $g(z)=-1/\log(1-e^{\frac{-(1-a)}{z}})$. That leads to
\begin{equation*}\label{distM}
\bP\big(M(h)\leq u\big)= u^{a\Theta(h)}\big[2u^{(1-a)}-1+\big(1-u^{(1-a)}\big)^{1/\eta(h)}\big],\quad u\in[0,1]\/.
\end{equation*}
We deduce that 
\begin{equation}\label{drivmad}
\begin{split}
\bE[M(h)]=&\int_0^1uf_M(h)(u)\mathrm{d}u\\
=&\frac{2a(\Theta(h)-1)+2}{a(\Theta(h)-1)+2}-\frac{a\Theta(h)}{a\Theta(h)+1}-\frac{\beta\bigg(\frac{a\Theta(h)+1}{(1-a)},1/\eta(h)\bigg)}{\eta(h)(1-a)\bigg[\frac{a\Theta(h)+1}{(1-a)}+(1/\eta(h))\bigg]}.
\end{split} 
\end{equation}
where $f_{M(h)}$ is the density of $M(h)$. Recall that $\E(F(Z(s)))=\frac12$ because $F(Z(s))\sim \UU([0\/,1])$ and return to equation (\ref{defmado1}) to get  equation (\ref{mado:1}).
\end{proof}

%It is easy to deduce from the PQD (positive quadrant dependence) of $Z$ that $\nu^F(h)=0$ (resp. $\nu^F(h) = 1/6$) when $Z(s)$ and $Z(s+h)$ are perfectly dependent (resp. independent). 
In the particular cases where $a=1$ or $a=0$, Proposition \ref{madogram} reduces to known results for max-stable processes (see \cite{cooley2006variograms}) and inverse max-stable processes (see 
\cite{guillou2014madogram}). \vero{That is}, the $F$-madogram for a max-stable spatial process  is given by
 \begin{equation}\label{mdoXX}
2 \nu^F(h)=\frac{\Theta(h)-1}{\Theta(h)+1}. 
 \end{equation}
and the  $F$-madogram of an asymptotically independent spatial process is given by 
 \begin{equation}\label{asymado}
2 \nu^F(h)=\frac{1-\eta(h)}{1+\eta(h)}
 \end{equation}
 In order to have a comprehensive view of the behavior of $\nu^F(h)$, we have plotted in Figure \ref{madoFig:1} below $h\leadsto \nu^F(h)$. We have considered two max-mixture models MM1 and 
MM2 described as \vero{follows}:
\begin{itemize}
 \item \textbf{MM1} max-mixture between a TEG max-stable process $X$ and an inverse Smith max-stable process $Y$;  
 \item \textbf{MM2} max-mixture between  $X$ \vero{as} in MM1  and \vero{an} inverse TEG max-stable process $Y$. 
\end{itemize}
 
In this Figure and for the two models MM1 and MM2, $\nu^F(h)$ has two sill  one corresponding to  $X$ and the second corresponding to $Y$. This is completely in accordance with the nested variogram concept as presented in \cite{wackernagel1998multivariate}. In data analysis, these two levels of the sill gives the researcher a hint about whether there is more than one spatial dependence structure in the data. \\
 \vero{Since the} $F$-madogram expresses with all the model parameters \vero{it should be useful} for the parameter estimation.  
%%%%%%%%%%%%%%%%%%%%%%%%%%%%%%%%%ù
 %%%%%%%%%%%%%%%%%%%%%%%%%%%%%
 \begin{figure}[h]\label{madog:1}
\centering
\fboxrule=0pt %border thickness
\fbox{\includegraphics[width=12cm,height=7cm]{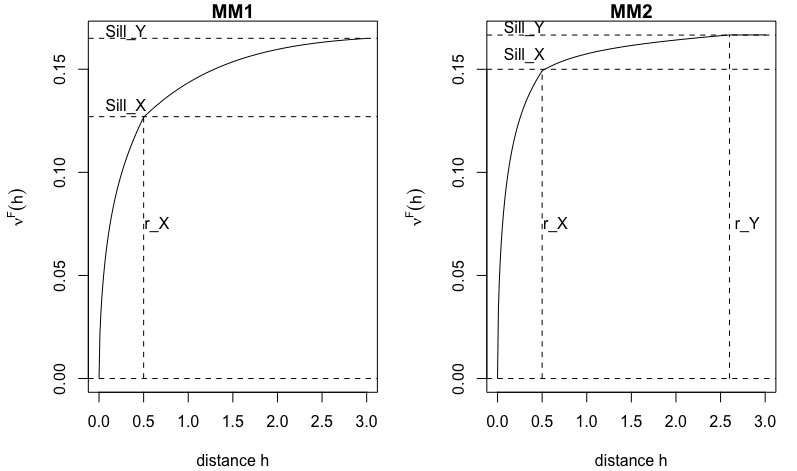}}\
\caption{ $h\leadsto \nu^F(h)$ for the max-mixture processes models MM1 and MM2. In MM1, $X$ has correlation function $\rho(h)=\exp(-h/\theta_X)$, $r_X=0.25$ and fixed radius $\theta_X=0.2$ and $Y$ 
has $\Sigma=\sigma_Y \bf I_d$, $\sigma_Y=0.6$. In MM2, $X$ \vero{has} the same setting \vero{as} in MM1 and $Y$ has $\rho_Y(h)=\exp(-h/\theta_Y)$, $\theta_Y=0.8$ and  fixed radius $r_Y=1.35$.  For 
the two models, we set $a=0.5$. \label{madoFig:1}}
\end{figure}

%++++++++++++++++++++++++++++++++++++++++++++++++++++++++++++++++++++++++++++++++++++++++++++++++++++++++++++++++++++++++++++++++++++++
\section{Model inference }\label{Se:4}
This section is devoted to the parametric inference for max-mixture processes. We begin with the presentation of the maximum composite likelihood estimation, then we present the least squares 
madogram \vero{estimation}.
\subsection{Parametric Estimation using Composite Likelihood } \ \\
Consider $(Z^k(s_1)\/,\ldots\/, Z^k(s_D))$, $k=1\/,\ldots\/, N$, be $N$ independent copies of a spatial process $(Z(s))_{s\in\ss}$, observed at $D$ locations $s_1\/,\ldots\/,s_D$.  Composite 
likelihood inference is  \vero{an} appropriate approach in  estimating  the parameter models of a spatial process $X$ (\cite{lindsay1888composite,varin2011overview}). \vero{Asymptotic} properties of 
this estimator has been proved in \cite{davis2011comments}. This approach has been applied successfully  to spatial max-stable processes  by \cite{davison2012geostatistics} and 
\cite{padoan2010likelihood} and is also used to identify the parameters of data exceedances over a large threshold, for example, \cite{bacro2014estimation} and \cite{thibaud2013threshold}. 
\vero{In this article, we focus on max-mixture models. Composite likelihood inference for max-mixture processes has been studied in \cite{bacro2016flexible} and \cite{wadsworth2012dependence}. 
We will compare our madogram based estimation to the composite likelihood  estimation described in \cite{bacro2016flexible}. }\\
\\
If the pairwise density of $Z$ can be computed and its  parameter $\psi$ is identifiable, then it is possible to estimate $\psi$ by maximizing the pairwise weighted log likelihood. For simplicity, we denote $Z_i^k$ for $Z^k(s_i)$. Let  
\begin{equation*}
\widehat{\psi}_L=\max_{\psi}\mathcal{P}(\psi),
\end{equation*}
where
\begin{equation}\label{likli:1}
\mathcal{P}(\psi)=\sum_{k=1}^N\sum_{i=1}^{D-1}\sum_{j>i}^{D}w_{ij}\log\mathscr{L}(Z_i^k,Z_j^k;\psi)=:\sum_{k=1}^N \mathcal{P}_k(\psi)\/.
\end{equation}
 where $\mathscr{L}$ is the likelihood of the pair $(Z_i^k,Z_j^k)$ and $w_{i,j}\geq0$ is the weight that specifies the contribution for each pair.  In \cite{padoan2010likelihood}, it is suggested to 
% take $w_{i,j}=0$ for any pair separated by distance over a specific  value $\delta$ and $w_{i,j}=1$ otherwise.\\
 \\
In \cite{coles2001introduction}, it is suggested to consider a  censor approach of the  likelihood, taking into account a threshold. This approach has been adopted in this study.  Let $G(\cdot,\cdot)$ be a pairwise distribution function  and consider  the thresholds $u_1$ and $u_2$; the likelihood contribution is 
\[ \mathscr{L}(z_1,z_2;\psi) =
  \begin{cases}
   \partial^2_{12} G(z_1,z_2;\psi)      &\mbox{if} \ z_1>u_1,z_2>u_2,\\
                         G(z_1,z_2;\psi)     &\mbox{if} \ z_1\leq u_1,z_2 \leq u_2,
  \end{cases}
\]
% where $\partial_i$ is the differentiation with respect to the variable $z_i$. 
In \cite{wadsworth2012dependence}, the censored likelihood is used in order to improve the estimation of the parameters 
related to asymptotic independence.  This censored approach was also applied by  \cite{bacro2016flexible} for the estimation of parameters of max-mixture processes. In \vero{that} paper, the 
replications $Z^1\/,\ldots\/, Z^N$ of $Z$ are assumed to be  $\alpha$-mixing rather than independent.  We denote generically by $\psi$ the parameters of the model. In \cite{bacro2016flexible}, it is 
proved, under some smoothness assumptions on the composite likelihood, that the composite maximum likelihood estimator $\widehat{\psi}_L$ for max-mixture processes is asymptotically normal as $N$ goes 
to infinity with asymptotic variance
 \begin{equation*}
\mathscr{G}(\psi)=\mathcal{J}(\psi)[\mathcal{K}(\psi)]^{-1}\mathcal{J}(\psi),
 \end{equation*} 
where $\mathcal{J}(\psi)=\bE[-\nabla^2\mathcal{P}(\psi)]$, $\mathcal{K}(\psi)=\mathrm{var}(\nabla\mathcal{P}(\psi))$.  The matrix $\mathscr{G}(\psi)$ is called the Godambe information matrix (see 
\cite{bacro2016flexible} and Theorem 3.4.7 in \cite{guyon1995random}). \\
An estimator  $\widehat{\mathcal{J}}$ of $\mathcal{J}(\psi)$ is obtained from the Hessian matrix computed in the  optimization algorithm. The  variability matrix   ${\mathcal{K}}(\psi)$ has to be 
estimated too.  In our context, we have independent replications of $Z$ and $N$ is large compared with respect to the dimension of $\psi$. Then, we can use the outer product of the estimation of 
$\widehat{\psi}$. Let
$$\widehat{\mathcal{K}}(\psi)=N^{-1}\sum_{k=1}^{N}\nabla\mathcal{P}_k(\widehat{\psi})\nabla\mathcal{P}_{k}(\widehat{\psi})'$$ 
 or by  Monte Carlo simulation with explicit formula of $\mathcal{P}_k(\psi)$ (see section 5. in \cite{varin2011overview}). In the case of samples of $Z$ satisfying the $\alpha$-mixing property, the estimation of  $\mathcal{K}(\psi)$ can be done using a subsampling technique introduced by \cite{carlstein1986use}; this was used in  \cite{bacro2016flexible}. Finally, model selection can be done by using the composite likelihood  information criterion \cite{varin2005note}:
 \begin{equation*}
  \mathrm{CLIC}=-2\bigg[\mathcal{P}(\widehat{\psi})-tr(\widehat{\mathcal{J}}^{-1}\widehat{\mathcal{K}})\bigg].
 \end{equation*}
Considering several max-stable models, the one that has the smallest CLIC will be chosen. In \cite{thibaud2013threshold}, the criterion $\mathrm{CLIC}^*=(D-1)^{-1}\mathrm{CLIC}$ is proposed. It is close to Akaike information criterion (AIC).
%===============================================================================================
%%%%%%%%
\subsection{Semi-parametric estimation using NLS of F-madogram}\label{estimator:1}  
 In this section, we shall define the non-linear least square estimation  procedure of the parameters set $\psi$ corresponding to the max-mixture model $Z$ using the $F$-madogram. This procedure can be considered  as an alternative method to the composite likelihood method.\\
Consider \vero{$Z^k$, $k=1\/,\ldots\/, N$  copies} of an  isotropic  max-mixture process $Z$ with unit Fréchet marginal laws ($F$ denotes the distribution function of a unit Fréchet law). It may be 
independent copies for example, if the data is recorded yearly (see \cite{naveau2009modelling}) or we shall consider that \vero{$(Z^k)_{k=1\/,\ldots\/, N}$ satisfies a} $\alpha$-mixing property 
(\cite{bacro2016flexible}). Let $\mathcal{H}$ be a finite subset of $\ss$, $J(x,y)=\frac{1}{2}|x-y|$ and $Y_{h,k}=J\big(F(Z^k(s)),F(Z^k(s+h))\big)$, $k=1,..,N$ and $h\in\mathcal{H}$. Therefore, for 
$k=1\/,\ldots\/, N$, the vectors $(Y_{h\/,k})_{h\in\mathcal{H}}$ have the same law and are considered either independent or $\alpha$-mixing (in $k$). The main motivation for using the F-madogram in 
estimation is  that it contains the dependence structure information for a fixed $h$ of $Y_{h,k}$ (see Section 3.2 in \cite{bacro2010testing}). \\
In what follows, we make the assumption that the vectors $(Y_{h\/,k})_{h\in\mathcal{H}}$ are i.i.d. Note that  from the  definition of the  $F$-madogram, we have $\bE[Y_{h,k}]=\nu^F(h,\psi)$ where 
$\nu^F(h,\psi)$ is the  $F$-madogram  of $Z$ with parameters $\psi$ defined in (\ref{mado:1}). If $Z$ has an unknown true parameter $\psi_0$ on a compact set $\Psi\subset \R^d$, we rewrite
\begin{equation}\label{reg:1}
Y_{h,k}=\nu^F(h,\psi_0)+\varepsilon_{h,k}.
\end{equation}  
The vectors $(\varepsilon_{h,k})_{h\in\mathcal{H}}$ are i.i.d errors with  $\bE[\varepsilon_{h,k}]=0$ and $\mbox{Var}(\varepsilon_{h,k})=\sigma_h^2>0$ is finite and unknown. \\
Let 
\begin{equation}\label{reg:1B}
\mathcal{L}(\psi)=\sum_{h\in\mathcal{H}}\frac{1}{N}\sum_{k=1,...,N}\big(Y_{h,k}-\nu^F(h,\psi)\big)^2
\end{equation}
Any vector $\widehat{\psi}_M$ in $\Psi$ which minimizes $\mathcal{L}(\psi)$ will be called a least square estimate of $\psi_0$:
\begin{equation}\label{reg:2}
\widehat{\psi}_M\in \argmin_{\psi\in\Psi} \mathcal{L}(\psi) \/.
\end{equation}

%%%%%%%%%%%
\begin{Th}\label{reg:4}
Assume that $\Psi\subset\R^d$ is compact and that $\psi\mapsto \nu^F(h\/,\psi)$ is continuous for all $h\in\mathcal{H}$. We assume that the vectors $(Y_{h\/,k})_{h\in\mathcal{H}}$ are i.i.d. Let 
\vero{$(\widehat{\psi}_M^N)_{N\in\N}$ be least square estimators of $\psi_0$.  Then, any limit point (as $N$ goes to infinity) $\psi$ of $(\widehat{\psi}_M^N)_{N\in\N}$} satisfies $\nu(h\/,\psi) = 
\nu(h\/,\psi_0)$ for all $h\in\mathcal{H}$. 
\end{Th}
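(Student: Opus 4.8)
The plan is to follow the classical consistency argument for least-squares M-estimators, taking advantage of the fact that the index set $\mathcal{H}$ is finite so that no uniform law of large numbers over an infinite class is needed. Write $\bar Y_h:=N^{-1}\sum_{k=1}^N Y_{h,k}$ and $\bar Y^{(2)}_h:=N^{-1}\sum_{k=1}^N Y_{h,k}^2$, so that expanding the square gives
\[
\mathcal{L}(\psi)=\sum_{h\in\mathcal{H}}\Bigl[\bar Y^{(2)}_h-2\,\nu^F(h,\psi)\,\bar Y_h+\nu^F(h,\psi)^2\Bigr].
\]
Since $Y_{h,k}=\tfrac12|F(Z^k(s))-F(Z^k(s+h))|\in[0,\tfrac12]$ is bounded, all moments exist and the strong law of large numbers applies to the i.i.d. vectors $(Y_{h,k})_{h\in\mathcal{H}}$: almost surely $\bar Y_h\to\E[Y_{h,1}]=\nu^F(h,\psi_0)$ and $\bar Y^{(2)}_h\to\E[Y_{h,1}^2]=\sigma_h^2+\nu^F(h,\psi_0)^2$. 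Completing the square, the pointwise almost-sure limit is
\[
\mathcal{L}_\infty(\psi):=\sum_{h\in\mathcal{H}}\Bigl[\sigma_h^2+\bigl(\nu^F(h,\psi)-\nu^F(h,\psi_0)\bigr)^2\Bigr],
\]
a continuous function whose minimum over $\Psi$ equals $\sum_{h\in\mathcal{H}}\sigma_h^2$ and is attained exactly at those $\psi$ for which $\nu^F(h,\psi)=\nu^F(h,\psi_0)$ for every $h\in\mathcal{H}$.

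Next I would upgrade this to uniform convergence on $\Psi$. Because $\mathcal{H}$ is finite and each $\psi\mapsto\nu^F(h,\psi)$ is continuous on the compact set $\Psi$ (and in fact bounded by $\tfrac12$, being itself an $F$-madogram value), one obtains the $\psi$-free bound
\[
\sup_{\psi\in\Psi}\bigl|\mathcal{L}(\psi)-\mathcal{L}_\infty(\psi)\bigr|\le \sum_{h\in\mathcal{H}}\bigl|\bar Y^{(2)}_h-\E[Y_{h,1}^2]\bigr|+2\sum_{h\in\mathcal{H}}\Bigl(\sup_{\psi\in\Psi}\nu^F(h,\psi)\Bigr)\,\bigl|\bar Y_h-\nu^F(h,\psi_0)\bigr|.
\]
The right-hand side does not depend on $\psi$ and tends to $0$ almost surely by the previous step, so $\mathcal{L}\to\mathcal{L}_\infty$ uniformly on $\Psi$ on an almost sure event. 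This is precisely where finiteness of $\mathcal{H}$ carries the argument; no Glivenko--Cantelli or stochastic-equicontinuity machinery is required.

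Finally I would run the standard argmin comparison on that almost sure event. Continuity of $\mathcal{L}$ on the compact set $\Psi$ ensures the minimizers $\widehat{\psi}_M^N$ exist, and compactness guarantees that $(\widehat{\psi}_M^N)_{N\in\N}$ admits limit points. Fix a limit point $\psi^\ast$ and a subsequence $\widehat{\psi}_M^{N_j}\to\psi^\ast$. The minimizing property gives $\mathcal{L}(\widehat{\psi}_M^{N_j})\le\mathcal{L}(\psi_0)$, and $\mathcal{L}(\psi_0)\to\mathcal{L}_\infty(\psi_0)=\sum_{h}\sigma_h^2$. On the other hand, uniform convergence combined with continuity of $\mathcal{L}_\infty$ yields $\mathcal{L}(\widehat{\psi}_M^{N_j})\to\mathcal{L}_\infty(\psi^\ast)$. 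Hence $\mathcal{L}_\infty(\psi^\ast)\le\sum_{h}\sigma_h^2$, and comparing with the explicit form of $\mathcal{L}_\infty$ forces $\bigl(\nu^F(h,\psi^\ast)-\nu^F(h,\psi_0)\bigr)^2=0$ for every $h\in\mathcal{H}$, which is the claim.

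I do not expect a genuine obstacle here: the result is a soft consequence of the law of large numbers once the finiteness of $\mathcal{H}$ is exploited, and the only points needing care are checking that the relevant moments exist (immediate from the boundedness of $Y_{h,k}$) and keeping the completion-of-the-square bookkeeping clean so that the minimizer structure of $\mathcal{L}_\infty$ is transparent. I would emphasise that the theorem deliberately concludes only about the madogram values at limit points and not about convergence of $\widehat{\psi}_M^N$ to $\psi_0$; pinning down $\psi^\ast=\psi_0$ would additionally require that $\psi_0$ be identifiable from $(\nu^F(h,\cdot))_{h\in\mathcal{H}}$, which is a separate hypothesis and not part of this statement.
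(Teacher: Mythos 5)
Your proposal is correct and follows essentially the same route as the paper: identify the almost-sure limit $\mathcal{L}_\infty(\psi)=\sum_{h\in\mathcal{H}}\sigma_h^2+\sum_{h\in\mathcal{H}}\bigl(\nu^F(h,\psi)-\nu^F(h,\psi_0)\bigr)^2$ via the law of large numbers over the finite set $\mathcal{H}$, then compare $\mathcal{L}(\widehat{\psi}_M^{N})$ with $\mathcal{L}(\psi_0)$ along a convergent subsequence to force the squared differences to vanish. The only difference is presentational: you expand around the empirical moments and explicitly verify uniform convergence on $\Psi$ (using boundedness of $\nu^F$ and finiteness of $\mathcal{H}$), a step the paper leaves implicit when it passes from pointwise convergence to convergence along the random sequence $\widehat{\psi}_M^{N}$.
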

\begin{remark} Of course, if $\psi\leadsto (\nu(h\/,\psi))_{h\in\mathcal{H}}$ is injective, then \vero{Theorem \ref{reg:4}} implies that the least square estimation is consistent, i.e. 
$\widehat\psi_M^N\rightarrow \psi_0$ a.s. as $T$ goes to infinity. In the examples considered below, it seems that the injectivity is satisfied provided $|\mathcal{H}| \geq d$, but we were unable to 
prove it.
\end{remark}

\begin{proof}
We follow the proof of Theorem II.5.1 in \cite{antoniadis1992regression}.
From  (\ref{reg:1}), we have, for   all $\psi\in\Psi$ 
\begin{equation*}
\begin{split}
\mathcal{L}(\psi)=&\sum_{h\in\mathcal{H}}\frac{1}{N}\sum_{k=1,...,N}\big(\nu^F(h,\psi_0)+\varepsilon_{h,k}-\nu^F(h,\psi)\big)^2\\
=&\sum_{h\in\mathcal{H}}\big(\nu^F(h,\psi_0)-\nu^F(h,\psi)\big)^2\\
+&\frac{2}{N}\sum_{h\in\mathcal{H}}\big(\nu^F(h,\psi_0)-\nu^F(h,\psi)\big)\left(\sum_{k=1,...,N}\varepsilon_{h,k}\right)+\sum_{h\in\mathcal{H}}\frac{1}{N}\sum_{k=1,...,N}\varepsilon_{h,k}^2.
\end{split}
\end{equation*} 
From the law of large numbers, we have 
\begin{equation*}
\frac{1}{N}\sum_{h\in\mathcal{H}}\sum_{k=1,...,N}\varepsilon_{h,k}^2\to\sum_{h\in\mathcal{H}}\sigma_h^2\quad \mathrm{a.s.}\quad\mathrm{as} \quad N\to\infty
\end{equation*}
and for any $h\in\mathcal{H}$,
$$\frac{1}{N}\sum_{k=1,...,N}\varepsilon_{h,k}\to 0 \ \mbox{a.s.}$$

Therefore, 
\begin{equation*}
\mathcal{L}(\psi)\to  \sum_{h\in\mathcal{H}}\sigma_h^2 + \sum_{h\in\mathcal{H}}\big(\nu^F(h,\psi_0)-\nu^F(h,\psi)\big)^2\quad \mathrm{a.s.}\quad\mathrm{as} \quad N\to\infty.
\end{equation*}
Take a sequence $(\widehat{\psi}_M^N)_{N\in\N}$ of least square estimators, taking if necessary a subsequence, we may assume that it converges to some $\psi^*\in\Psi$. Using the continuity of $\psi 
\leadsto \nu^F(h\/,\psi)$, we have
$$\mathcal{L}(\widehat{\psi}_M^N) \rightarrow \sum_{h\in\mathcal{H}}\sigma_h^2 + \sum_{h\in\mathcal{H}}\big(\nu^F(h,\psi_0)-\nu^F(h,\psi^*)\big)^2\quad \mathrm{a.s.}\quad\mathrm{as} \quad 
N\to\infty.$$
Since $\widehat{\psi}_M^N$ is a least square estimator, $\mathcal{L}(\widehat{\psi}_M^N)\leq \mathcal{L}(\psi_0)\rightarrow \displaystyle \sum_{h\in\mathcal{H}}\sigma_h^2$. It follows that
$$\sum_{h\in\mathcal{H}}\big(\nu^F(h,\psi_0)-\nu^F(h,\psi^*)\big)^2 =0$$
and thus $\nu(h\/,\psi^*) = \nu(h\/,\psi_0)$ for all $h\in\mathcal{H}$. 
 \end{proof}
The asymptotic normality of the least square estimators should also be obtained by following, e.g., \cite{buhl2016Semiparametric} and using the asymptotic normality of the $F$-madogram obtained in  \cite{cooley2006variograms}. Nevertheless, the calculation of the asymptotic variance will require to calculate the covariances between $\nu^F(h_1\/,\psi)$ and $\nu^F(h_2\/,\psi)$, which is not straightforward.
%++++++++++++++++++++++++++++++++++++++++++++++++++++++++++++++++++++++++++++++++++++++++++++++++++++++++++++++++++++++++++++++++++++++
\section{Simulation study}\label{Se:5}
This section is devoted to some simulations in order to evaluate the performance of the least square estimator and to compare it with the maximum composite likelihood estimator. Recall that 
$\widehat{\psi}_M$ denotes the least square estimator of the parameter vector $\psi$ and $\widehat{\psi}_L$ denotes the composite likelihood estimator.

\subsection{Outline the \vero{simulation} experiment }

In order to evaluate the performance  of the non-linear least square estimator $\widehat{\psi}_M$ as defined in (\ref{mado:1}), we have generated data from the model MM1 above.  
The \vero{least square madogram} estimator $\widehat{\psi}_M$ has been  compared with true one $\psi_0$  and also with parameters estimated by \vero{maximum} composite likelihood  $\widehat{\psi}_L$ 
proposed in \cite{bacro2016flexible,wadsworth2012dependence}, \vero{on the same simulated} data. We considered $50$ sites randomly and uniformly distributed in the square $\cA=[0,1]^2$. \\
We have generated  $N=1.000$ i.i.d observations for each site \vero{and replicated this experiment}  $J=100$  times. We have considered several  mixing parameters: 
$a:=\{0,0.25,0.5,0.75,1\}$. For the  composite likelihood estimator $\widehat{\psi}_L$, we used the censored procedure with \vero{the $0.9$ empirical quantile of data at each site as threshold $u$}. 
The fitting of  $\widehat{\psi}_L$ was done using the code which was used in  \cite{bacro2016flexible} with some \vero{appropriated} modifications.
%%%%%%%%%%%%%%%%%%%%%%%%%%%%%%%%%%%%%%%%%%%%%%%%%%%%%%%%%
\subsection{Results on the parameters estimate}
\vero{In Figure \ref{EST:1}, we represented the boxplots of the errors}, that is $(\widehat{\psi}_M-\psi_0)$ and $(\widehat{\psi}_L-\psi_0)$ for model 
MM1. Generally, the estimators above worked well, although the variability in some estimates were relatively large, especially for the asymptotic  independence parameters. It also shows some bias in 
the estimation  of  \vero{the} asymptotic independence model parameters. \\
\\
It is well known that asymptotic independence is  difficult to estimate (see \cite{davison2013geostatistics}). Therefore, the estimation accuracy of the parameters is very sensitive. On one other 
hand, the fitting of $\alpha(h)$ which appears in TEG models in (\ref{max59}), is delicate and might \vero{lead to quite different estimates} with different data 
\cite{davison2012geostatistics}.%% Furthermore, the dependence measures does not have all dependence information \cite{cooley2006variograms}. \\   
\begin{figure}[!htb]
\centering
\fboxrule=0pt %border thickness
\fbox{\includegraphics[width=12cm,height=6cm]{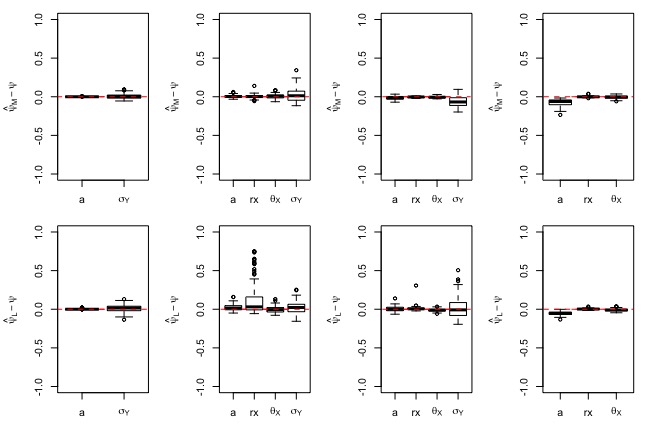}}\
\caption{Boxplots display $(\widehat{\psi}-\psi)$ of estimated parameters vector $\widehat{\psi}=(\widehat{a},\widehat{r}_X,\widehat{\theta}_X,\widehat{\sigma}_Y)^T$ for the MM1 model by the two 
estimators $\widehat{\psi}_M$ and $\widehat{\psi}_L$. The figures in the first row and from left to right concern the estimator $\widehat{\psi}_M$ for $a\in\{0,\vero{0.25},0.75,1\}$, the second row 
concerns 
$\widehat{\psi}_L$. We have set, $r_{X}=0.25$, $\theta_{X}=0.20$ and $\sigma_{Y}=0.6$ over a square $\cA=[0,1]^2$.  \label{EST:1}}
\bigskip
\fbox{\includegraphics[width=12cm,height=6cm]{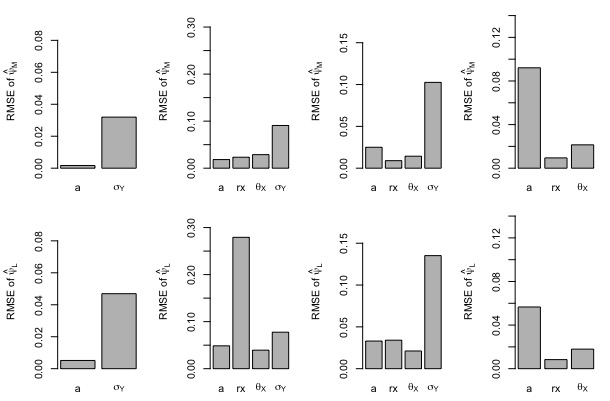}}\
\caption{Barplots display the RMSE of $\widehat{\psi}$ for each estimated parameters $\widehat{\psi}=(\widehat{a},\widehat{r}_X,\widehat{\theta}_X,\widehat{\sigma}_Y)^T$ for MM1 and the corresponding  
 two estimators $\widehat{\psi}_M$ and $\widehat{\psi}_L$. The bars in the first row and from left to right represent the RMSE of the estimator $\widehat{\psi}_M$  when $a:=\{0,\vero{0.25},0.75,1\}$, 
respectively and the same for the second row for $\widehat{\psi}_L$. We set $r_{X}=0.25$, $\theta_{X}=0.20$ and $\sigma_{Y}=0.6$ over a square $\cA=[0,1]^2$.  \label{EST:2}}
\end{figure}
\vero{Another comparison indicator is } the root mean square error (RMSE) (\cite{zheng2015assessing,zheng2014modeling}): \vero{let $\widehat{\psi_j}$ denote the $j$th estimation } (either least 
square or composite likelihood estimation),
 \begin{equation}
 \mathrm{RMSE}=\bigg[J^{-1}\sum_{j=1}^J(\widehat{\psi_j}-\psi)^2\bigg]^{1/2},
 \end{equation}  
The barplot in Figure \ref{EST:2}  displays the  RMSE for each parameter of MM1 model. We see on these barplots that when $a$ is close to $0$ ($a=0;0.25$), the estimator $\widehat{\psi}_M$ 
over-performs  the estimator $\widehat{\psi}_L$ and vice versa when $a\in\{0.75,1\}$. For $a=0.5$ the performance of the two estimators seem  equivalent.
\subsection{Asymptotic normality}
%Since finding the asymptotic normality of the estimator  $\psi_M$ is difficult, we will test it by a simulation. From the large-sample convergence properties of the estimators, we can see if the 
%distribution of the  errors  between the arbitrarily  estimates $\widehat{\psi}_{M}$ and the true one ${\psi}_0$  close to normal distribution.
\vero{We may figure out whether the least square estimator $\widehat{\psi}_M$ is asymptotically normal through the graphe of the errors $\widehat{\psi}_L-\psi_0$.} In Figure \ref{DEEN:1}, the graphs 
represent the 
distributions of  the errors  of each \vero{estimated} parameters of MM1 model for $a:=\{0,0.5,1\}$. We \vero{did}  $J=500$ simulation experiments.  \\
We can see \vero{on  Figure \ref{DEEN:1} that} the densities of the errors of the parameters  \vero{seem close to the shape of centered} normal distribution. %In another word, the parameters have 
%the consistency and asymptotic normality with mean 0 and exist variance.
\begin{figure}[h]
\centering
\fboxrule=0pt %border thickness
\includegraphics[width=13cm,height=7cm]{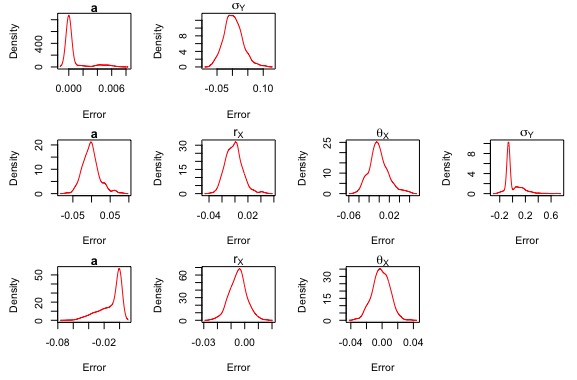}
\caption{Graphs  display the densities  of the errors between $\widehat{\psi}_M$  and $\psi_0$ for each estimated parameters in the set $\psi=(a,r_X,\theta_X,\sigma_Y)^T$ of MM1 model.  The graphs 
from first row and last one represent the densities of the \vero{errors $\widehat{\psi}_M-\psi_0$}  when 
$a:=\{0,0.5,1\}$. We set $r_{X}=0.25$, $\theta_{X}=0.20$ and $\sigma_{Y}=0.6$ over a square $\cA=[0,1]^2$.  \label{DEEN:1}}
\end{figure}

%===========================================================
\section{Real data example}\label{Se:6}
In this section, we analyze \vero{a real data and fit} it to some models considered in this study by composite likelihood and LS-madogram procedures.
\subsection{Data \vero{analysis}}
 We analyzed daily rainfall \vero{along the} east coast of Australia. We selected  39 locations randomly from \vero{this} region. \vero{The data is } daily measured in the period \vero{from April to 
September} for 35 years from 1982-2016.  The data \vero{is} available \vero{from the}  Australian Bureau of Meteorology (http://www.bom.gov.au/climate/data/). \\

\vero{In order to} explore the possibility of anisotropy of the spatial dependence, we used the same test \vero{as} in  \cite{bacro2016flexible}. We divided all data set according to directional 
sectors $(-\pi/8,\pi/8]$, $(\pi/8,3\pi/8]$, $(3\pi/8,5\pi/8]$ and $(5\pi/8,7\pi/8]$, where $0$ indicate to north direction. \vero{We use}  the empirical F-madogram 
$\widehat{\nu}^F(h)$. The directional loss smoothing of such empirical measure in the Figure \ref{madoFig:11}. (A), shows no evidence of anisotropy. \\

%To comprehension of general behaviour (with respect to dependence structure) of the data set,  we evaluated $\widehat{\nu}^F(h)$ for all estimated data set. It seems from the loss smoothing behavior 
\vero{In Figure \ref{madoFig:11}. (B), the empirical F-madogram is plotted for the whose data set. It seems that asymptotic  dependence between the locations is} present up to 
distance 500 km and asymptotic independence could be present \vero{at} the remaining distance. \\
\begin{figure}[h]
\centering
\fboxrule=0pt %border thickness
\fbox{\includegraphics[width=13cm,height=7cm]{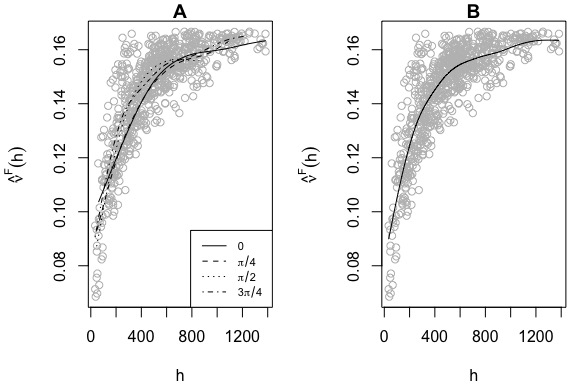}}\
\caption{ Empirical evaluation of $\widehat{\nu}^F(h)$. The Grey circles represented the empirical value \vero{between all pairs}. The lines in (A) represent the smoothed value of the empirical  of 
$\widehat{\nu}^F(h)$ according to directional sectors $(-\pi/8,\pi/8]$, $(\pi/8,3\pi/8]$, $(3\pi/8,5\pi/8]$ and $(5\pi/8,7\pi/8]$. The line in (B) represent the smoothed value of the empirical  of 
$\widehat{\nu}^F(h)$ for all directions. \label{madoFig:11}}
\end{figure}
%%%%%%%%%%%%%%%%%%%%%%%%%%%%%%%%%%%%%%%%%ù
%%%%%%%%%%%%%%%%%%%%%%%%%%%%%%%ù
\subsection{Data fitting}
Our interest in this section is to chose  \vero{a} reasonable model \vero{for} the data. We considered  $7$ models described below, for each model,  the parameters \vero{are} estimated by 
LS-madogram and \vero{maximum} composite likelihood. The selection criteria for LS-madogram estimator $\widehat{\psi}_M$ \vero{is} computed as the following: $$MIC:=\log{ 
\mathcal{L}(\widehat{\psi})}+(2k(k+1)/(T-k-1)),$$
where $k$ is the number of parameters  in \vero{the model and $T$ is the number of the observations, that is: $T=N\times|\mathcal{H}|$, where $|\mathcal{H}|$ is the number of observed pairs}. With 
respect to censored composite likelihood estimators we adopted the  CLIC selection criteria. The two criteria selected the model MM1 as the best model.  \\ 
\vero{We consider the following models}:  
\begin{description}
\item[MM1]  max-mixture between asymptotic dependence process represented by  TEG max-stable process $X$ with exponential  correlation function $\rho(h)=\exp\{-(h/\theta_X)\}, \theta_x>0$ and 
$\mathcal{B}_X$ is a disk of fixed and unknown radius $r_X$. \vero{The} asymptotic independence \vero{is represented by an } inverse Brown-Resnik  max-stable process $Y$ with variogram 
$2\gamma(h)=2\sigma^2{(1-\exp\{-(h/\theta_Y)\})}$, $ \theta_Y, \sigma >0$; $\sigma^2$ is the sill of the variogram. 
\item[MM2] max-mixture between  $X$ \vero{as} in MM1 and  \vero{an} inverse inverse Smith  max-stable process $Y$ with $\tau(h)=h/\sqrt{\sigma_Y}$.\\ 
\item[M1] A TEG max-stable process  $X$ specified in  MM1.
\item[M2] A Brown-Resnik max-stable process $X$ \vero{as} specified in  MM1.
\item[M3] An inverse Brown-Resnik max-stable process $Y$ \vero{as} specified in  MM1.

\item[M4] A Smith max-stable process $X$ \vero{as} specified in  M2.
\item[M5] A inverse Smith max-stable process $Y$ \vero{as} specified in  M2.
\end{description}
For all \vero{the considered models}, the margin distribution \vero{are} assumed to be unit Fr\'echet. Therefore \vero{it requires to}  transform the data set to Fr\'echet. Most of papers 
\vero{(see for example \cite{bacro2016flexible} and  \cite{wadsworth2012dependence}) use parametric transformations: they fit}  GEV parameters for each location separately and then transform  the 
data to Fr\'echet. In this study, we adopted \vero{a} non-parametric transformation by \vero{using the empirical c.d.f.}. For  censored composite likelihood procedures, we set $u=0.9$ and 
$\delta=\infty$.% With respect to AIC and CLIC , we assumed that the data is $\alpha$-mixing. 

\begin{table}[h]
\caption{Summary of the fitted models. the distance scale is kilometres. Composite likelihood procedure indicated by CL  and  LS-madogram  procedure indicated by LS with selection criteria SC are CLIC 
and \vero{MIC}, respectively.} 
\begin{tabular}[height=4.3cm]{l*{6}{c}r}
Model            &   & a & $\theta_X$& $r_X$ & $\theta_Y$ & $\sigma_Y$  & SC  \\
\hline
\hline
  MM1   &CL   &0.262  &1217.3 &1364.5 &3102.4 &  3.457  &{\bf 6807406}\\   
             &LS   &0.259  &1285.7& 1390.0& 5794.8  & 2.013&{\bf 1.917034}\\
\hline   
 MM2    &CL   & 0.248& 31.16 &70.15 &998.84 &&7924609 \\
            &LS  &0.185 &35.51 &48.14 &871.19&&1.917234\\
\hline
\hline
        &    &  $\theta_X$& $r_X$ & &   &&  \\   
\hline
    M1        &CL    & 931   &307.86&&&&7926261  \\
                 &LS    &1270 & 255.64&&&& 1.945177\\
\hline 
\hline 
        &    &  $\theta_X$& $\sigma_X$ &$\theta_Y$ &$\sigma_Y$   &&   \\   
\hline

    M2        &CL  & 931.02 & 3.078663 & &  &  &   7926261 \\
            &LS & 361.36 & 1.90816 & &  &  &1.96165\\
\hline   

    M3        &CL   &  &  &1644.76  &2.702282& &7918643  \\
            &LS  &  &  &  1383.08& 1.394928&  &1.924574 \\
\hline   

    M4        &CL   &  & 85.34  & &  & & 8016633  \\
            &LS  &  & 193.43 &  &  &  & 1.988753  \\
\hline   

    M5        &CL   &  &  &  &  & 256.39  & 7988838  \\
                 &LS  &  &  &  &  & 334.60&1.929235 \\
\hline      
\hline   
\end{tabular}
\end{table}
\vero{We remark that the two Decision Criteria (CLIC and MIC) choose the same model. We would like to emphasize that CLIC and MIC are not comparable, one is related to the composite likelihood while 
the other one is related to the least squared madogram difference. For the tested models, we would keep the model with the smallest CLIC or the smallest MIC, depending on the used estimation method. 
Also, the least squared madogram estimation is involves less computations, indeed, the maximum composite likelihood estimation requires to estimate the Godambe matrix in order to compute the CLIC.}
%============================================================
\section{Conclusions}\label{Se:7}
\vero{The calculation of the F-madogram for max-mixture processes show that it writes with both the AD and the AI parameters. This leads us to propose} a semi-parametric estimation procedure using 
F-madogram ${\nu}^F(h)$ as an alternative  to composite likelihood. The simulation study showed that the estimation procedure based on  $\nu^F(h)$ performs better than the composite likelihood 
procedure when the model is near to asymptotic independence. We applied these estimator procedures to real data example. \vero{On the considered example, the results obtained by composite likelihood 
maximization and least squared madogram difference are similar.} \\
\ \\
\underline{Acknowledgements:} This work was supported by the LABEX MILYON (ANR-10-LABX-0070) of Université de Lyon, within the program ``Investissements d'Avenir'' (ANR-11-IDEX-0007) operated by 
the French National Research Agency (ANR). We also acknowledge the projet LEFE CERISE.
\bibliographystyle{apalike}
\bibliography{sample.bib}

\end{document}